\newcolumntype{C}[1]{>{\centerring\arraybackslash}p{#1}}
\DeclarePairedDelimiter\ceil{\lceil}{\rceil}
\newtheorem{theorem}{Theorem}[section]
\newtheorem*{theorem*}{Theorem}
\newtheorem{corollary}[theorem]{Corollary}
\newtheorem{proposition}[theorem]{Proposition}
\theoremstyle{definition}
\newtheorem{example}[theorem]{Example}
\theoremstyle{remark}
\newtheorem{remark}[theorem]{Remark}
\numberwithin{equation}{section}
\begin{document}

\title{Congruences modulo powers of 11 for some eta-quotients.}

\author{Shashika Petta Mestrige}
\address{Mathematics Department\\
Louisiana State University\\
Baton Rouge, Louisiana}
\email{pchama1@lsu.edu}

\subjclass[2010]{Primary 11P83; Secondary 05A17} 
\date{June 02, 2019}

\begin{abstract}
The partition function $ p_{[1^c11^d]}(n)$ can be defined using the generating function,
\[\sum_{n=0}^{\infty}p_{[1^c{11}^d]}(n)q^n=\prod_{n=1}^{\infty}\dfrac{1}{(1-q^n)^c(1-q^{11 n})^d}.\]
In this paper, we prove infinite families of congruences for the partition function $ p_{[1^c11^d]}(n)$ modulo powers of $11$ for any integers $c$ and $d$, which generalizes Atkin and Gordon's congruences for powers of the partition function. The proofs use an explicit basis for the vector space of modular functions of the congruence subgroup  $\Gamma_0(11)$.

\end{abstract}

\maketitle
\section{Introduction}
\label{introduction}

An (integer) partition of $n$ is a non-increasing sequence of positive integers $\lambda_1 \geq \lambda_2 \cdots \geq \lambda_r \geq 1$ that sum to $n$. Let $p(n)$ be the number of partitions of $n$. By convention, we take  $p(0)=1$ and $p(n)=0$ for negative $n$.

This function has been extensively studied in the last century. In the 1920's 
Ramanujan discovered amazing congruence properties for $p(n)$.

 \begin{theorem*}
    For all positive integers $j$, we have,
    \begin{align*}
p(5^jn+\delta_{5,j}) & \equiv0\pmod{5^j}, \\
p(7^jn+\delta_{7,j}) & \equiv0\pmod{7^{[\frac{j+2}{2}]}}, \\
p(11^jn+\delta_{11,j}) & \equiv0\pmod{11^j},
\end{align*}
    where  $24\delta_{\ell,j}\equiv1\pmod{\ell^j}$ for $\ell \in \{5,7,11\}$.
\end{theorem*}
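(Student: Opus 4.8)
The plan is to realize each generating function $\sum_{n\ge 0} p(\ell^j n + \delta_{\ell,j})q^n$ as (essentially) the $q$-expansion of a modular function on $\Gamma_0(\ell)$ and to extract the divisibility from the arithmetic of the Atkin $U_\ell$-operator acting on a finite-dimensional space. Writing $\sum_{n\ge 0}p(n)q^n = q^{1/24}/\eta(\tau)$ with $\eta(\tau)=q^{1/24}\prod_{n\ge 1}(1-q^n)$ the Dedekind eta function, I would first fix an eta-quotient $\Psi_\ell$ on $\Gamma_0(\ell)$ so that, after multiplying by $\Psi_\ell$ and stripping the fractional $q$-power, the series $\sum_n p(\ell n + \delta_{\ell,1})q^n$ becomes the $q$-expansion of a genuine modular function $\phi_1$ holomorphic on the upper half-plane and with a pole only at the cusp $0$. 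The operator $U_\ell$, defined on $q$-expansions by $U_\ell(\sum a(n)q^n)=\sum a(\ell n)q^n$, satisfies the intertwining identity $U_\ell\big(f(\tau)\,g(\ell\tau)\big)=g(\tau)\,U_\ell(f(\tau))$, and it carries modular functions for $\Gamma_0(\ell)$ with poles supported at $0$ back into the same space. Iterating, I would produce functions $\phi_j$ whose $q$-coefficients reproduce $p(\ell^j n+\delta_{\ell,j})$ up to a controlled factor.

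Second, I would pin down the ambient space. Let $V$ be the space of modular functions for $\Gamma_0(\ell)$ that are holomorphic on $\mathbb{H}$ and have a pole of bounded order only at $0$. For $\ell\in\{5,7\}$ the curve $X_0(\ell)$ has genus $0$, so $V$ is spanned by powers of a single Hauptmodul $t_\ell$ (an explicit eta-quotient), and I would write each $\phi_j$ as a polynomial in $t_\ell$ with rational coefficients. For $\ell=11$ the genus is $1$, so no single Hauptmodul suffices; here I would invoke the explicit basis for the space of $\Gamma_0(11)$ modular functions announced in the abstract, expressing the $\phi_j$ in that basis. In all three cases the upshot is a finite filtration of $V$ stable under $U_\ell$, together with an explicit integer matrix $M_\ell$ describing the action of $U_\ell$ on the basis.

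Third, the arithmetic heart is the $\ell$-adic behaviour of the recursion. The functions satisfy $\phi_{j+1}=U_\ell(\phi_j\cdot \Psi_\ell)$, which on the level of coordinate vectors becomes $v_{j+1}=M_\ell\,v_j$. I would prove, by examining the $\ell$-adic valuations of the entries of $M_\ell$ against the chosen basis, that each application of $U_\ell$ forces one additional factor of $\ell$ into every coordinate for $\ell\in\{5,11\}$, while for $\ell=7$ the valuation is guaranteed to increase only once per every two applications, which produces precisely the exponent $\big[\tfrac{j+2}{2}\big]$. Tracking the relevant coefficient then yields $p(\ell^j n+\delta_{\ell,j})\equiv 0$ to the stated power of $\ell$.

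The main obstacle is the case $\ell=11$: because $X_0(11)$ has genus $1$, the space $V$ is not a polynomial ring in one generator and the matrix $M_{11}$ does not have the clean triangular shape available for $5$ and $7$. Establishing that $U_{11}$ still gains one factor of $11$ per step requires the explicit basis and a careful estimate of the $11$-adic valuations of $M_{11}$ in that basis; this is exactly the delicate point that distinguishes Atkin's treatment of $11$ from Watson's of $5$ and $7$. A secondary subtlety is the $\ell=7$ bookkeeping: I would need to show that although a single step may fail to raise the $7$-adic valuation, two consecutive steps always do, so that the floor exponent is both attained and sharp.
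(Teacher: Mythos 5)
This theorem is not proved in the paper at all: it is the classical Ramanujan congruence theorem, quoted in the introduction purely as motivation, with the proofs attributed to Ramanujan ($j=1$ for $5$ and $7$), Watson (all $j$ for $5$ and $7$), and Atkin (the prime $11$). So there is no in-paper proof to measure your attempt against. That said, your outline is precisely the Watson--Atkin strategy, and it is also the template the paper itself follows in Sections \ref{prem}--\ref{proofs} for its generalization: an eta-quotient $\phi(\tau)=\eta(121\tau)/\eta(\tau)$, the intertwining identity \eqref{E:2.2}, an explicit basis $J_\nu$ of modular functions on $\Gamma_0(11)$ (needed because $X_0(11)$ has genus $1$, exactly the obstacle you flag), and $11$-adic lower bounds on the entries of the matrix of $U_{11}$ in that basis. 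Specializing the paper's Theorem \ref{T:1.1} to $c=1$, $d=0$ recovers the $\ell=11$ congruence, so your plan is consistent with what the paper actually does.

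Two caveats. First, your recursion $\phi_{j+1}=U_\ell(\phi_j\cdot\Psi_\ell)$ with a \emph{fixed} multiplier is not quite the right bookkeeping: in Watson's and Atkin's iterations, and in the paper's \eqref{E:2.8}, the multiplier alternates between even and odd steps (for $p(n)=p_{[1^1 11^0]}$ the exponent $\lambda_r$ alternates between $1$ and $0$, i.e.\ one multiplies by $\phi$ only every other step), and the arithmetic progressions $\delta_{\ell,j}$ and the lower limits $\mu_r$ depend on getting this alternation right. Second, your write-up is a strategy rather than a proof: the entire difficulty of the theorem is concentrated in establishing the valuation bound on the matrix of $U_\ell$ (the analogue of \eqref{E:2.4} together with the data of Tables \ref{T:2} and \ref{T:3}), and in the $\ell=7$ case in showing that the gain of one factor per two steps is actually attained. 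You correctly identify these as the delicate points, but you assert rather than prove them, so as it stands the proposal is an accurate roadmap to the literature (Watson \cite{W}, Atkin \cite{A2}, Gordon \cite{GB}) rather than a self-contained argument.
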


Ramanujan in \cite{RS1}  proved the first two congruences for the case of $j=1$  by using the Jacobi triple product and later in \cite{RS2} using the theory of modular forms on $SL_2(\mathbb{Z})$. For arbitrary $j\geq 1$, Watson in \cite{W} gave a proof using modular equations for prime $5$ and $7$. Ramanujan in \cite{RS3} stated that he found a proof for the third congruence for $j=1,2$, but did not include the proof. In $1967$, Atkin in \cite{A2} gave a proof for the third congruence.

These fascinating congruence properties not only hold for the partition function itself, but also for the restricted partitions. To study a large class of restricted partitions, we study the partition function  $p_{[1^c\ell^d]}(n)$. This partition function also well studied in recent years, for example see Chan and Toh \cite{CHT}, and Liuquan Wang \cite{WL}. 

The partition function $p_{[1^c\ell^d]}(n)$ is defined using the generating function in the following way.

\[\prod_{n=1}^{\infty}\dfrac{1}{(1-q^n)^c(1-q^{\ell n})^d}=\sum_{n=0}^{\infty}p_{[1^c\ell^d]}(n)q^n.\]

To illustrate the importance of studying this partition function, let's look at the following four examples .

\begin{itemize}
    \item $d=0$ and $c>0$: $c-$color partitions.\\

    This partition function generates partitions of $n$ in to $c$ colors. See Gordon \cite{GB} and Atkin \cite{A1} for interesting congruence relations for primes less than or equal to 13.\\

    \item $c=1, d=-1$: $\ell$-regular partitions.\\

    This partition function generates partitions of $n$ with a restriction that no parts divisible by $\ell$. This partitions also well studied in recent years. See Wang \cite{WL2} and \cite{WL3} for divisibility properties of $5$- regular and $7$-regular partitions.\\

    \item $c=1, d=-\ell$: $\ell$-core partitions.\\

    This partition function generates partition of $n$ with a restriction that no hook numbers are divisible by $\ell$. See Wang \cite{WL} to see the divisibility properties of this partition function.\\

    \item $c=1, d=1$ and $\ell=2$: The cubic partition function.\\

    This partition function has a deep connection to the Ramanujan's cubic continued fraction, in \cite{CH} and \cite{CH2} Chan used this connection to obtain interesting congruences.
\end{itemize}

\vspace{3mm}

In 2016, in \cite{WL}, Wang proved the following congruences.\\
\begin{theorem*}[Wang, 2016]
For any integers $n\geq 0$ and $k\geq 1$,
\begin{align*}
p_{[1^111^{-11}]}\left(11^kn+11^k-5\right) & \equiv 0\pmod{11^k},\\
p_{[1^111^{-1}]}\left(11^{2k-1}n+\dfrac{7\cdot11^{2k-1}-5}{12}\right) & \equiv 0\pmod{11^k},\\
p_{[1^111^{1}]}\left(11^kn+\dfrac{11^k+1}{2}\right) & \equiv 0\pmod{ 11^k}.
\end{align*}
\end{theorem*}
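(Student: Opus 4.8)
The plan is to treat all three congruences uniformly by the Atkin--Watson method, realizing each partition generating function as an eta-quotient and studying the arithmetic of the Atkin operator $U_{11}$ on modular functions of $\Gamma_0(11)$. First I would normalize: the generating series of $p_{[1^c11^d]}$ equals $q^{(c+11d)/24}\eta(\tau)^{-c}\eta(11\tau)^{-d}$, an eta-quotient which, up to the $q$-shift, is a modular form of weight $-(c+d)/2$ on $\Gamma_0(11)$. A direct check shows that the three offsets in the statement are exactly the residues $\beta$ solving $24\beta\equiv c+11d$: for $(c,d)=(1,-11)$ one has $(c+11d)/24=-5$, for $(1,-1)$ one has $12\beta\equiv-5$, and for $(1,1)$ one has $2\beta\equiv1$. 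These are precisely the fixed residues for which $U_{11}$ carries the progression $11^kn+\beta_k$ to $11^{k-1}n+\beta_{k-1}$, so each congruence reduces to bounding the $11$-adic valuation of the coefficients of an iterate $U_{11}^{\,k}$ (respectively $U_{11}^{\,2k-1}$) applied to the relevant eta-quotient.

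Next I would pass to weight $0$. For $(c,d)=(1,-1)$ the eta-quotient $\eta(11\tau)/\eta(\tau)$ already has weight $0$, so the associated series is (after the $q$-shift) a modular function on $\Gamma_0(11)$; for $(1,-11)$ (weight $5$) and $(1,1)$ (weight $-1$) I would multiply by a fixed auxiliary eta-quotient to land in the space of weight-$0$ modular functions that are holomorphic on the upper half-plane and meromorphic at the two cusps $0,\infty$. The essential structural input is that $U_{11}$ preserves this space: because $11\parallel 11$, the Atkin operator sends modular functions on $\Gamma_0(11)$ to modular functions on $\Gamma_0(11)$. I would then fix the explicit integral basis for the space of such functions with poles confined to the cusps. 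Here the genus of $X_0(11)$ is $1$, so---unlike levels $5$ and $7$---there is no single Hauptmodul; instead the function field is generated by an eta-quotient such as $(\eta(\tau)/\eta(11\tau))^{12}$ together with a second algebraically independent generator, and the basis is organized by order of pole at $\infty$ subject to the resulting Weierstrass-type relation.

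With the basis in hand, I would compute the matrix of $U_{11}$ with respect to the $\mathbb{Z}$-lattice spanned by the basis elements and prove the arithmetic core: the entries of this matrix are divisible by the powers of $11$ that force one further factor of $11$ to appear at each step (respectively at each pair of steps in the $(1,-1)$ case). Writing $L_k$ for the function whose $q$-expansion lists $p_{[1^c11^d]}(11^kn+\beta_k)$, I would define the sequence $L_k$ by alternately applying $U_{11}$ and multiplying by a fixed modular function so that the progressions remain compatible, express each $L_k$ in the basis, and induct on $k$: the divisibility of the $U_{11}$-matrix entries propagates a factor $11^k$ into every coefficient of $L_k$, including the constant term. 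Reading off that constant coefficient yields $p_{[1^c11^d]}(11^kn+\beta_k)\equiv0\pmod{11^k}$, and for the $(1,-1)$ case the slower two-step gain produces the $11^{2k-1}$ progression with modulus $11^k$.

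The \textbf{main obstacle} is exactly the genus-$1$ feature of $X_0(11)$. At levels $5$ and $7$ the Hauptmodul makes $U_p$ essentially triangular with transparent divisibility, whereas at level $11$ one must control the action of $U_{11}$ on both generators and across the elliptic relation, and then verify the precise $11$-adic divisibility of every matrix entry. Establishing this ``gain one power of $11$ per step'' estimate---and, for the $11$-regular case $(1,-1)$, the sharper ``one power per two steps'' behaviour responsible for the $11^{2k-1}$ progression---is the technical heart of the argument; the remainder is bookkeeping built on the explicit basis announced in the abstract.
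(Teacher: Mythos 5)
Your proposal follows essentially the same route as the paper: realize the generating functions as eta-quotients, pass to weight-zero modular functions on $\Gamma_0(11)$ via an auxiliary eta-quotient (the paper uses $\phi=\eta(121\tau)/\eta(\tau)$ together with the Atkin--Lehner level-lowering property of $U_{11}$), expand the iterates $L_k$ in the explicit Atkin--Gordon basis $J_\nu$ adapted to the genus-one curve $X_0(11)$, and propagate powers of $11$ through the induction using the $11$-adic lower bounds on the entries of the $U_{11}$-matrix. The technical heart you correctly isolate (the per-step divisibility of those matrix entries) is exactly what the paper imports from Gordon's inequality and the tables of $\delta$ and $\theta$, and the three congruences of Wang then drop out as the special cases $(c,d)=(1,-11),(1,-1),(1,1)$ of the general theorem.
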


Furthermore in \cite{WL}, he stated that it possible to obtain congruences for each value $c,d\in \mathbb{Z}$ separately.The primary goal of this paper is to find a unified way to prove congruences for the partition function $p_{[1^c{11}^d]}(n)$ for any $c,d\in \mathbf{Z}$.
\begin{theorem}\label{T:1.1}
For any integers $c$, $d$ and for any positive integer $r$,
\begin{equation}
    p_{[1^c{11}^d]}({11}^rm+n_r)\equiv 0\pmod{{11}^{A_r}}
\end{equation}
where $24n_r\equiv(c+{11}d)\pmod{{11}^r}$.
\end{theorem}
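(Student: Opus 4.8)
The plan is to realize the generating function as a modular function on $\Gamma_0(11)$ and to extract the arithmetic progression $11^r m + n_r$ by applying the Atkin--Lehner $U_{11}$ (Hecke) operator repeatedly, then control the $11$-adic growth of the coefficients via the structure of the space of modular functions. Concretely, I would first write

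\begin{equation*}
F_{c,d}(\tau)=\prod_{n=1}^{\infty}\frac{1}{(1-q^n)^c(1-q^{11n})^d}=\frac{1}{\eta(\tau)^c\,\eta(11\tau)^d}\cdot q^{(c+11d)/24},
\end{equation*}

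so that after multiplying by a suitable fractional power of $q$ the object becomes (up to a character/multiplier) an eta-quotient whose level is $11$. The shift $24 n_r\equiv (c+11d)\pmod{11^r}$ is exactly the condition that picks out the right residue class once we account for the $q^{(c+11d)/24}$ prefactor, so the congruence in Theorem~\ref{T:1.1} is the statement that the coefficients of $U_{11}^{\,r}F_{c,d}$ in the appropriate class are divisible by $11^{A_r}$.

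The central mechanism is the $U_{11}$ operator. First I would establish how $U_{11}$ acts on $F_{c,d}$: because the level is $11$, the operator $U_{11}$ maps the space of modular functions on $\Gamma_0(11)$ (with the relevant poles only at the cusps) into itself, and one should find an explicit finite-dimensional invariant subspace. This is where the ``explicit basis for the vector space of modular functions of $\Gamma_0(11)$'' promised in the abstract does the work: I would choose a basis adapted to the Hauptmodul of $\Gamma_0(11)$ (an eta-quotient such as $(\eta(\tau)/\eta(11\tau))^{12}$, say, generating the function field), express $F_{c,d}$ and its $U_{11}$-iterates in that basis, and compute the matrix of $U_{11}$ on the finite set of basis elements that can appear. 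The key quantitative point is that the entries of this matrix, and the initial coordinates of $F_{c,d}$, carry high powers of $11$; iterating $U_{11}$ then multiplies these valuations, producing a growing exponent $A_r$.

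The main steps in order are: (1) set up $F_{c,d}$ as a weakly holomorphic modular function on $\Gamma_0(11)$, handling the multiplier system coming from the fractional exponent; (2) compute the action of $U_{11}$ on the chosen basis and isolate the finite invariant subspace; (3) prove an $11$-adic estimate for the coordinate vectors showing that one application of $U_{11}$ raises the $11$-adic valuation by a fixed amount, set up as an induction on $r$; (4) translate the valuation bound back into divisibility of $p_{[1^c11^d]}(11^r m + n_r)$ and read off the exponent $A_r$. I expect the main obstacle to be step (3): controlling the $11$-adic valuations \emph{uniformly in $c$ and $d$}, since these integers can be arbitrary and even negative. The growth rate $A_r$ must be extracted from the eigenvalue/valuation structure of the $U_{11}$-matrix, and the delicate part is showing that the valuation gain per iteration does not degrade as the ``input'' exponents $c,d$ vary, which presumably forces $A_r$ to depend on $r$ (and perhaps on $c,d$) through a carefully chosen floor-type expression rather than the naive $A_r=r$.
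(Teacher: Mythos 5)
Your high-level plan (realize the generating function modularly, iterate $U_{11}$, expand in an explicit basis for modular functions on $\Gamma_0(11)$, and run an induction on $11$-adic valuations of the coordinates) is the right family of ideas, but two of your concrete steps would fail as stated.

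First, there is no Hauptmodul for $\Gamma_0(11)$: the curve $X_0(11)$ has genus $1$, so its function field is not generated by a single eta-quotient such as $(\eta(\tau)/\eta(11\tau))^{12}$. This is precisely why the prime $11$ is harder than $5$ and $7$ and why the paper must invoke Atkin's basis $\{J_\nu\}_{\nu\in\mathbf{Z}}$ of the space $V$ of modular functions on $\Gamma_0(11)$ holomorphic away from $0$ and $\infty$, with prescribed orders at both cusps. The quantitative engine of the proof is Gordon's lower bound $\pi(C^{\lambda}_{\mu,\nu})\geq\bigl[\tfrac{11\nu-\mu-5\lambda+\delta}{10}\bigr]$ for the matrix of $T_\lambda$ in this basis, together with the finite table of values $\theta(\lambda,\mu)$ recording when an extra factor of $11$ is gained; the exponent is $A_r=\sum_{i}\theta(\lambda_i,\mu_i)$, not an eigenvalue or a "fixed gain per iteration." Your step (2) cannot be carried out with a genus-zero-style basis.

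Second, you propose to apply $U_{11}$ repeatedly to $F_{c,d}$ itself. But $\eta(\tau)^{-c}\eta(11\tau)^{-d}$ has weight $-(c+d)/2$ and a nontrivial multiplier, and $U_{11}$ does not carry such an object back into a fixed finite-dimensional (or even weight-zero, level-$11$) space; nor does $U_{11}^r F_{c,d}$ by itself isolate the progression $11^r m+n_r$. The missing device is the auxiliary function $\phi(\tau)=\eta(121\tau)/\eta(\tau)$ together with the identity $U_p\bigl(f(\tau)g(p\tau)\bigr)=g(\tau)U_p\bigl(f(\tau)\bigr)$: one defines $L_r=U_{11}\bigl(\phi^{\lambda_{r-1}}L_{r-1}\bigr)$ with $\lambda_{r-1}$ alternating between $c$ and $d$ (the roles of $q^n$ and $q^{11n}$ swap at each step), so that each $L_r$ is a genuine weight-zero modular function in $V$ and simultaneously equals an integral eta-quotient times $\sum_m p_{[1^c11^d]}(11^r m+n_r)q^m$. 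This construction is also what forces $24n_r\equiv c+11d\pmod{11^r}$. Without the alternating $\phi$-twist your induction in step (3) has no invariant space to live in, and the translation in step (4) back to the stated arithmetic progression does not go through.
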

Here $A_r$  only depends on the integers $c,d$ and it can be calculated explicitly.\\

Moreover, we can obtain the following corollary, this is  similar to the Gordon's Theorem 1.2 in \cite{GB}.

\begin{corollary}\label{C:1.2}
For any positive integer $r$,
\[
 p_{[1^c11^d]}(11^rm+n_r)\equiv 0\pmod{11^{\frac{1}{2}\alpha r +\epsilon}}
 \]
where $24n_r\equiv(c+11d)\pmod{11^r}$, $\epsilon=\epsilon(c,d)=O(\log|c+11d|)$  and when $c+11d\geq 0$ , $\alpha$ depends on the residue of $c+11d\pmod{120}$ which is shown in Table \ref{T:1}.
 \begin{table}[htp]
\centering
\footnotesize\setlength{\tabcolsep}{2.3pt}
\begin{tabular}{l@{\hspace{6pt}} *{25}{c}}
\cmidrule(l){2-25}
& 1 & 2 & 3 & 4 & 5 & 6 & 7 & 8 & 9 & 10 & 11 & 12 & 13 & 14 & 15 & 16 & 17 & 18 & 19 & 20 & 21 & 22 & 23 & 24 \\
\midrule
\bfseries 0
 & 2 & 1 & 2 & 1 & 1 & 1 & 2 & 2 & 1 & 1 & 2 & 2 & 1 & 2 & 1 & 0 & 0 & 1 & 1 & 0 & 0 & 1 & 1 & 0\\
\bfseries 24
& 1 & 1 & 1 & 1 & 2 & 2 & 1 & 1 & 2 & 2 & 1 & 0 & 0 & 0 & 0 & 1 & 1 & 0 & 0 & 1 & 1 & 1 & 0 & 0\\
\bfseries 48
& 1 & 1 & 2 & 2 & 1 & 1 & 1 & 0 & 1 & 0 & 1 & 0 & 0 & 1 & 1 & 0 & 0 & 1 & 0 & 1 & 0 & 1 & 0 & 0\\
\bfseries 72
& 2 & 1 & 1 & 1 & 2 & 1 & 2 & 1 & 2 & 1 & 2 & 2 & 1 & 1 & 1 & 2 & 1 & 2 & 1 & 2 & 1 & 1 & 1 & 0\\
\bfseries 96
& 0 & 0 & 1 & 0 & 1 & 0 & 1 & 0 & 1 & 1 & 0 & 0 & 0 & 1 & 0 & 1 & 0 & 1 & 0 & 1 & 1 & 0 & 0 & 0\\
\bottomrule
\addlinespace
\end{tabular}
\caption{Values of $\alpha$}\label{T:1}
\end{table}

Here the entry is $\alpha(24i+j)$ where row labelled $24i$ and column labeled $j$. When $c+11d<0$, the last column must be changed to $2, 2, 2, 0, 2$.
\end{corollary}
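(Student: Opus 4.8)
The plan is to deduce Corollary \ref{C:1.2} from Theorem \ref{T:1.1} by extracting the precise linear growth rate of the exponent $A_r$. The proof of Theorem \ref{T:1.1} produces $A_r$ not as a closed form but by iterating the $U_{11}$-operator on the explicit basis of modular functions for $\Gamma_0(11)$: writing $L_r$ for the modular function whose $q$-expansion packages the coefficients $p_{[1^c11^d]}(11^rm+n_r)$, one has a recursion of the shape $L_{r+1}=T(L_r)$, and $A_r$ is the minimal $11$-adic valuation of the coordinates of $L_r$ in that basis. Thus the entire content of the Corollary is that this minimal valuation is bounded below by a line of slope $\tfrac12\alpha$ with intercept $\epsilon$, together with the identification of $\alpha$ and the control on the size of $\epsilon$. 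My first task is therefore to revisit that construction and read off, for each basis element, the lower bound already proved for the $11$-adic valuation of the matrix entries of $T$.

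Having assembled the matrix $M$ of $T$ together with its entrywise valuation bounds, the growth rate $A_r=\min_{\mathrm{coords}} v_{11}(M^r v_0)$ becomes a purely combinatorial (Newton-polygon / linear-programming) quantity determined by these valuations: on iterating $M$, the cheapest path through the valuation matrix has a well-defined asymptotic cost per step, and that per-step cost is exactly $\tfrac12\alpha$. The factor $\tfrac12$ is forced by the way the order of $L_r$ at the cusps of $\Gamma_0(11)$ is halved under the normalization, exactly as in Gordon's argument for $c$-colored partitions, which Corollary \ref{C:1.2} is designed to parallel.

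Next I would pin down $\alpha$ as a function of $c$ and $d$. The per-step valuation gain guaranteed by the $U_{11}$-estimates is governed by the order of vanishing (equivalently the pole order) of the relevant eta-quotient at the two cusps $0$ and $\infty$, and these orders are given by floor-function expressions in $c+11d$. Because the denominators occurring in those floors all divide $120$, the resulting slope depends only on the residue $c+11d\pmod{120}$; evaluating it case by case reproduces Table \ref{T:1}. The asymmetry recorded for $c+11d<0$, where the last column becomes $2,2,2,0,2$, is precisely the floor-versus-ceiling boundary effect appearing when $c+11d$ is an exact multiple of $24$, i.e. where $\lfloor x\rfloor$ and $-\lfloor -x\rfloor$ disagree.

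Finally, the error term $\epsilon$ is the intercept of the bounding line, i.e. the bounded discrepancy between $A_r$ and $\tfrac12\alpha r$ arising from the transient before the linear regime begins and from the initial valuations of $L_0$. These initial data are $11$-adic valuations of the binomial/multinomial-type coefficients attached to $\prod(1-q^n)^{-c}$, whose minimal value grows only like $O(\log|c+11d|)$ by Kummer's theorem on carries, giving $\epsilon=O(\log|c+11d|)$. The step I expect to be the main obstacle is the second one: making the asymptotic-slope argument uniform in $c$ and $d$ and rigorously separating the genuinely linear contribution $\tfrac12\alpha r$ from the bounded tail $\epsilon$. The difficulty is that the dimension of the ambient space of modular functions itself grows with $|c+11d|$, so one must verify that this growth feeds only into the intercept $\epsilon$ and never alters the slope $\tfrac12\alpha$.
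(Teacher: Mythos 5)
There is a genuine gap, and it starts with a misreading of what $A_r$ is. In the paper $A_r$ is not ``the minimal $11$-adic valuation of the coordinates of $L_r$''; it is \emph{defined} in \eqref{E:2.7} as the explicit finite sum $A_r=\sum_{i=0}^{r-1}\theta(\lambda_i,\mu_i)$, and Theorem \ref{T:1.1} only asserts $\pi(L_r)\geq A_r$. Consequently the corollary is not a Newton-polygon or linear-programming statement about the cheapest path through the valuation matrix of the operator $T_\lambda$ under iteration -- an argument you gesture at but do not carry out, and which would be much harder to make rigorous. The actual proof is an elementary evaluation of the sum \eqref{E:2.7}: the sequence $\lambda_i$ alternates between $c$ and $d$ with period $2$ by construction, and by \eqref{E:2.11} the sequence $\mu_i$ also becomes $2$-periodic (taking the values $\lceil(11c+d)/24\rceil+\omega$ and $\lceil(c+11d)/24\rceil+\omega$) as soon as $11^i>|c+11d|$, i.e.\ after roughly $\log_{11}|c+11d|$ steps. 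Each subsequent pair of indices therefore contributes exactly $\alpha=\theta(d,\lceil(11c+d)/24\rceil+\omega)+\theta(c,\lceil(c+11d)/24\rceil+\omega)$, giving slope $\tfrac12\alpha$, and the transient of length $O(\log_{11}|c+11d|)$ contributes at most $1$ per term, which is the entire source of $\epsilon$. This eventual $2$-periodicity of $(\lambda_i,\mu_i)$ is the one idea the corollary needs, and it is absent from your proposal.

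Two of your specific attributions are also wrong. The factor $\tfrac12$ has nothing to do with a halving of cusp orders under normalization; it is simply the period-$2$ averaging just described. And the bound $\epsilon=O(\log|c+11d|)$ does not come from Kummer's theorem applied to coefficients of $\prod(1-q^n)^{-c}$; it comes from the length of the pre-periodic range of $i$ in \eqref{E:2.11}. Your final worry -- that the dimension of the space of modular functions grows with $|c+11d|$ -- is likewise misplaced: the basis $\{J_\nu\}$ is fixed once and for all, and the only quantity that grows is the starting index $\mu_r$, whose stabilization is exactly what \eqref{E:2.11} controls. Finally, the mod $120$ periodicity of $\alpha$ is not a statement about ``denominators dividing $120$'' in cusp orders; it follows from the recurrences \eqref{E:2.6}, $\theta(\lambda-11,\mu)=\theta(\lambda+12,\mu-5)=\theta(\lambda,\mu)$, combined with the ceiling formulas above, which show that $\alpha$ is invariant under $c\mapsto c+120-11k$, $d\mapsto d+k$. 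Your reading of the modified last column for $c+11d<0$ as a boundary effect at exact multiples of $24$ is the one point that matches the paper: it is precisely the case $\omega(c,d)=1$.
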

\begin{remark}
This is the same shape as Gordon's result for $k-$colored partitions, with k replaced by $c+11d$.
\end{remark}

The remainder of the paper is as follows. Section \ref{prem} reviews properties of modular forms and operators on their coefficients. In Section \ref{proofs} the main results are proven. The paper closes with several examples in Section \ref{ex}.
 
\section{Preliminaries}\label{prem}
For a Laurent series $f(\tau)=\sum_{n\geq N} a(n)q^n$, we define the $U_p$ operator by,
\begin{equation}\label{E:2.1}
U_p\left(f(\tau)\right)=\sum_{pn\geq N} a(pn)q^n.
\end{equation}
\vspace{3mm}
Let $g(\tau)=\sum_{n\geq N} b(n)q^{n}$ be another Laurent series. The following simple property will play a key role in our proof.
\begin{equation}\label{E:2.2}
U_p\left(f(\tau)g(p\tau)\right)=g(\tau)U_p\left(f(\tau)\right).
\end{equation}

\begin{proposition}[\cite{AL}, lemma 7]\label{P:Gamma0N}
If $f(\tau)$ is a modular function for $\Gamma_0(N)$, if $p^2|N $, then $U_p\Big{(}f(\tau)\Big{)}$ is a modular function for $\Gamma_0(N/p)$.
\end{proposition}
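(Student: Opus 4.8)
The plan is to realize $U_p$ as an average of the weight-zero action over a finite set of upper-triangular matrices, and then to show that right multiplication by an element of $\Gamma_0(N/p)$ merely permutes these matrices up to left multiplication by $\Gamma_0(N)$, under which $f$ is invariant. First I would record the classical identity
\[
U_p f(\tau) = \frac{1}{p}\sum_{j=0}^{p-1} f\!\left(\frac{\tau+j}{p}\right),
\]
which follows from the fact that $\sum_{j=0}^{p-1} e^{2\pi i n j/p}$ equals $p$ if $p\mid n$ and $0$ otherwise, applied term-by-term to the $q$-expansion $f(\tau)=\sum a(n)q^n$. Writing $\alpha_j=\begin{pmatrix}1 & j\\ 0 & p\end{pmatrix}$ so that $(\tau+j)/p=\alpha_j\tau$, this reads $U_p f=\frac{1}{p}\sum_{j=0}^{p-1} f\circ\alpha_j$.

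The core step is the coset bookkeeping. Fix $\gamma=\begin{pmatrix}a & b\\ c & d\end{pmatrix}\in\Gamma_0(N/p)$. I would show that for each $j$ there is a unique $\sigma(j)\in\{0,\dots,p-1\}$ and a matrix $\delta_j$ with $\alpha_j\gamma=\delta_j\,\alpha_{\sigma(j)}$ and $\delta_j\in\Gamma_0(N)$, where $\sigma$ is a permutation of $\{0,\dots,p-1\}$. Computing $\delta_j=\alpha_j\gamma\alpha_{\sigma(j)}^{-1}$ gives
\[
\delta_j=\begin{pmatrix} a+jc & \dfrac{(b+jd)-(a+jc)\sigma(j)}{p}\\[6pt] pc & d-c\,\sigma(j)\end{pmatrix}.
\]
Its determinant is automatically $1$, and the lower-left entry $pc$ is divisible by $N$ because $(N/p)\mid c$. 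The only nontrivial requirement is integrality of the upper-right entry, i.e. $(b+jd)\equiv(a+jc)\sigma(j)\pmod p$.

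This is exactly where the hypothesis $p^2\mid N$ enters: it forces $p\mid(N/p)\mid c$, hence $a+jc\equiv a\pmod p$, and since $ad-bc=1$ with $p\mid c$ yields $ad\equiv1\pmod p$, the element $a$ is a unit mod $p$. Thus $\sigma(j)\equiv a^{-1}(b+jd)\pmod p$ is uniquely determined, and since $j\mapsto a^{-1}(b+jd)$ is affine with invertible slope $a^{-1}d$, the map $\sigma$ is a bijection of $\mathbb{Z}/p\mathbb{Z}$. Invariance then follows directly: for $\gamma\in\Gamma_0(N/p)$,
\[
U_p f(\gamma\tau)=\frac{1}{p}\sum_{j=0}^{p-1} f(\alpha_j\gamma\tau)=\frac{1}{p}\sum_{j=0}^{p-1} f(\delta_j\alpha_{\sigma(j)}\tau)=\frac{1}{p}\sum_{j=0}^{p-1} f(\alpha_{\sigma(j)}\tau)=U_p f(\tau),
\]
using $\delta_j\in\Gamma_0(N)$ together with the $\Gamma_0(N)$-invariance of $f$, and that $\sigma$ permutes the index set. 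Finally I would note that $U_p f$ is meromorphic at the cusps of $\Gamma_0(N/p)$: as a finite sum $\frac{1}{p}\sum_j f\circ\alpha_j$ of compositions of $f$ with fixed rational matrices, each term has a meromorphic Fourier expansion at every cusp (reflecting the behavior of $f$ at the cusp $\alpha_j s$), so the sum does as well. The main obstacle is precisely the coset-permutation step, and in particular pinning down why $p^2\mid N$ rather than merely $p\mid N$ is essential — it is what guarantees $p\mid c$, making $a$ a unit mod $p$ and thereby securing both the integrality of $\delta_j$ and the bijectivity of $\sigma$.
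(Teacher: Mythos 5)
Your proof is correct and follows essentially the same route as the source the paper relies on: the paper offers no proof of this proposition, citing Atkin--Lehner's Lemma 7, and your argument (writing $U_pf=\frac1p\sum_j f\circ\alpha_j$ and checking that $\alpha_j\gamma=\delta_j\alpha_{\sigma(j)}$ with $\delta_j\in\Gamma_0(N)$ and $\sigma$ a permutation) is precisely the standard coset-permutation mechanism behind that lemma. You also correctly pinpoint where $p^2\mid N$ enters: it forces $p\mid c$, making $a$ a unit mod $p$, so the integrality of $\delta_j$ and the bijectivity of $\sigma$ both go through and the extra representative $\bigl(\begin{smallmatrix}p&0\\0&1\end{smallmatrix}\bigr)$ that obstructs descent when $p\parallel N$ never arises.
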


\vspace{2mm}

Let $V$ be the vector space of modular functions on $\Gamma_0(11)$, which are holomorphic everywhere except possibly at 0 and $\infty$.\\

Atkin constructed a basis for $V$, Gordon in [2] slightly modified these basis elements and defined $J_{\nu}(\tau)$. For detailed information about the construction of the basis elements see \cite{A2}.

\begin{theorem}[Gordon \cite{GB}]
For all $\nu\in \mathbf{Z}$, we have:\\
\begin{enumerate}
    \item $J_\nu(\tau)=J_{\nu-5}(\tau)J_5(\tau)$,
    \item $\{J_\nu(\tau)|-\infty <\nu<\infty \}$ is a basis of $V$,
    \item ${\rm ord}_{\infty}J_\nu(\tau)=\nu$,
    \item $${\rm ord}_0J_\nu(\tau)=
\begin{cases}
		 -\nu \qquad & \text{if } \nu \equiv0\pmod{5}, \\
		 -\nu-1 & \text{if } \nu\equiv 1,2 \text{ or } 3\pmod{5}, \\
		-\nu-2 & \text{if } \nu\equiv 4\pmod{5}.
\end{cases}
$$
	\item The Fourier series of $J_\nu(\tau)$ has integer coefficients, and is of the form $J_\nu(\tau)=q^{\nu}+\dots$.
\end{enumerate}
\end{theorem}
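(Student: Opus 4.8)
The plan is to realize $V$ geometrically and let one arithmetic fact about $X_0(11)$ drive the whole statement. The curve $X_0(11)$ has index $[\,\mathrm{SL}_2(\mathbf{Z}):\Gamma_0(11)\,]=12$, no elliptic points (because $11\equiv 3\pmod 4$ and $11\equiv 2\pmod 3$), and exactly two cusps $0$ and $\infty$, so the genus formula gives genus $1$. Thus $E:=X_0(11)$ is an elliptic curve whose two cusps $P_\infty$ and $P_0$ are interchanged by the Atkin--Lehner involution $w_{11}\colon \tau\mapsto -1/(11\tau)$, and $V$ is precisely the space of rational functions on $E$ regular away from $P_\infty$ and $P_0$, graded by the order of vanishing at $\infty$. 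Every clause of the theorem will be read off from the single input that the class $T:=[(P_\infty)-(P_0)]$ has order exactly $5$ in $\mathrm{Pic}^0(E)$.

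First I would establish this order-$5$ fact and simultaneously produce $J_5$. The eta-quotient $J_5(\tau):=\bigl(\eta(11\tau)/\eta(\tau)\bigr)^{12}$ has weight $0$, and Ligozat's criteria ($\sum_\delta \delta r_\delta=120\equiv 0$, $\sum_\delta (11/\delta)r_\delta=-120\equiv 0\pmod{24}$, and $\prod_\delta \delta^{r_\delta}=11^{12}$ a square) show it lies in $V$; its $q$-expansion is $q^5+\cdots$ with integer coefficients, and $w_{11}$, which swaps the cusps, carries $J_5$ to a nonzero constant times $1/J_5$, whence ${\rm ord}_0 J_5=-{\rm ord}_\infty J_5=-5$. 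So $\mathrm{div}(J_5)=5(P_\infty)-5(P_0)$, which forces the order of $T$ to divide $5$; since $P_\infty\neq P_0$ the order is not $1$, and as $5$ is prime it is exactly $5$. I would then fix integral seeds $J_0=1,J_1,J_2,J_3,J_4\in V$ with ${\rm ord}_\infty J_\nu=\nu$ and minimal pole at $0$ (existence by Riemann--Roch; integrality and the normalization of the finitely many lower coefficients following Atkin's explicit construction) and extend by $J_\nu:=J_{\nu\bmod 5}\,J_5^{\lfloor \nu/5\rfloor}$. With this definition clause (1) is immediate, and since orders at $\infty$ add and $J_5=q^5+\cdots$ is integral and monic, clauses (3) and (5) follow at once.

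The substantive computation is clause (4), and here the order-$5$ fact does all the work. By Abel's theorem on the genus-$1$ curve $E$, a degree-zero divisor $\nu(P_\infty)-b(P_0)+D$ with $D\ge 0$ effective and supported off the cusps is principal iff $\nu T$ matches the group-law image of $D$; to make ${\rm ord}_\infty J_\nu=\nu$ with the smallest pole at $0$ I would minimize $b=\nu+\deg D$. If $\nu\equiv 0\pmod 5$ then $\nu T=0$, so $\deg D=0$ works and ${\rm ord}_0 J_\nu=-\nu$. If $\nu\equiv 1,2,3\pmod 5$ then $-\nu T\notin\{0,T\}$, so a single interior zero $Q$ with $(Q)-(P_0)=-\nu T$ is a genuine non-cusp and gives ${\rm ord}_0 J_\nu=-\nu-1$. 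If $\nu\equiv 4\pmod 5$ then $-\nu T=T$, so the choice $\deg D=1$ would force $Q=P_\infty$, merely raising the order at $\infty$ rather than creating a pole at $0$; one is pushed to $\deg D=2$ and ${\rm ord}_0 J_\nu=-\nu-2$. This trichotomy is exactly clause (4).

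Finally, clause (2). Linear independence is free, since the $J_\nu$ have pairwise distinct orders $\nu$ at $\infty$. For spanning I would count dimensions: for $n,m\ge 0$ with $n+m\ge 1$, Riemann--Roch on $E$ gives $\dim L\bigl(n(P_\infty)+m(P_0)\bigr)=n+m$, and this space is exactly the set of $f\in V$ with pole of order at most $n$ at $\infty$ and at most $m$ at $0$; using (3) and (4) one checks that precisely $n+m$ of the $J_\nu$ lie in it, so they form a basis of it, and letting $n,m\to\infty$ exhausts $V$. The hardest and most conceptual part is the order-$5$ cuspidal fact together with the Abel's-theorem bookkeeping in (4): this single feature of the arithmetic of $X_0(11)$ is what produces the $\bmod 5$ trichotomy of orders at $0$ and hence the period-$5$ multiplicative structure in (1). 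A secondary point that cannot be skipped is the integrality and canonical normalization of the five seed functions, which is what guarantees the integer coefficients asserted in (5); that is the step where I would follow Atkin's and Gordon's explicit construction rather than bare Riemann--Roch.
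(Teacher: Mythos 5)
The paper does not actually prove this statement: it is imported verbatim from Gordon \cite{GB}, who in turn takes the basis from Atkin's explicit construction in \cite{A2}, where the seed functions are built by hand out of eta-quotients and theta-type expressions and their orders at the two cusps are computed directly from transformation formulas and the valence formula. Your route is genuinely different and more conceptual: you realize $V$ as the ring of functions on the genus-one curve $X_0(11)$ regular off the cusps, feed in the single arithmetic fact that the cuspidal class $T=[(P_\infty)-(P_0)]$ has exact order $5$ in $\mathrm{Pic}^0$ (correctly witnessed by $(\eta(11\tau)/\eta(\tau))^{12}$, whose divisor is $5(P_\infty)-5(P_0)$), and then Abel's theorem yields the mod-$5$ trichotomy of clause (4) while Riemann--Roch gives the dimension count behind clause (2); your spot checks of that count are consistent. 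This buys an explanation of \emph{why} the period-$5$ structure in clauses (1) and (4) appears at all, which the explicit construction leaves opaque. What it does not buy is clause (5): Riemann--Roch gives existence of $J_1,\dots,J_4$ but neither integrality of Fourier coefficients nor a canonical normalization --- note that for $\nu\equiv 4\pmod 5$ the conditions $\mathrm{ord}_\infty J_\nu=\nu$ and $\mathrm{ord}_0 J_\nu\ge -\nu-2$ leave a one-dimensional ambiguity even after fixing the leading coefficient, since $L\bigl((\nu+2)(P_0)-(\nu+1)(P_\infty)\bigr)$ is nontrivial --- and you rightly flag that you would fall back on Atkin's explicit construction for exactly that step, which is the same point at which the paper defers the entire lemma to \cite{A2}.
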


Now let
\[\phi(\tau):=q^5\prod_{n=1}^{\infty}\left(\frac{1-q^{121n}}{1-q^n}\right)=\frac{\eta(121\tau)}{\eta(\tau)}.\]

This is a modular function on $\Gamma_0(121)$ by proposition \ref{P:Gamma0N}, hence $V$ is mapped to itself by the linear transformation,
\[T_{\lambda}:f(\tau)\rightarrow U_{11}\left(\phi(\tau)^{\lambda}f(\tau)\right)\]
where $\lambda$ is an integer. Let $(C_{\mu,v}^{\lambda})_{\mu,\nu}$ be the matrix of the linear transfomation $T_\lambda$ with respect to the basis elements $J_{\nu}$.

\begin{equation}\label{E:2.3}
U_{11}\left(\phi(\tau)^{\lambda}J_{\mu}(\tau)\right)
=\sum_{\nu}C_{\mu,\nu}^{\lambda}J_{\nu}(\tau)
\end{equation}

Gordon in \cite{GB}, proved an inequality (equation (17)) about the $11$-adic orders of the matrix elements (denoted by $\pi(C_{\mu,\nu})$).

\begin{equation}\label{E:2.4}
\pi(C_{\mu,\nu}^{\lambda})\geq \left[\dfrac{11{\nu}-\mu-5\lambda+\delta}{10}\right].
\end{equation}

Here $[x]$ means the floor function of the real number $x$ and $\delta=\delta(\mu,\nu)$ depends on the residues of $\mu$ and $\nu \pmod{5}$ according to the Table \ref{T:2}.\\
    \begin{center}
    \begin{table}[htp]
    \begin{tabular}{|l@{\hspace{16pt}}|*{6}{c|}}
\hline
& \multicolumn{5}{c|}{$\nu$} \\
\cline{1-6}
\hline
$\mu$& 0 & 1 & 2 & 3 & 4 \\
\hline\hline
\cline{1-6}
0 & -1 & 8 & 7 & 6 & 15\\
1 & 0 & 9 & 8 & 2 & 11 \\
2 & 1 & 10 & 4 & 13 & 12\\
3 & 2 & 6 & 5 & 4 & 13 \\
4 & 3 & 7 & 6 & 5 & 9\\ \hline
\end{tabular}\\
\vspace{2mm}
    \caption{Values of $\delta(\mu,\nu)$.}\label{T:2}
    \end{table}
    \end{center}
\vspace{-5mm}
We can clearly see from the table that $\delta \geq -1$, so we can rewrite \ref{E:2.4} as,

\begin{equation}\label{E:2.5}
\pi(C_{\mu,\nu}^{\lambda})\geq \left[\dfrac{11\nu-\mu-5\lambda-1}{10}\right].
\end{equation}

Now by Lemma $2.1(v)$, the Fourier series of $T_\lambda(J_{\mu})$ has all coefficients divisible by 11 if and only if,

$$C_{\mu,\nu}^{\lambda}\equiv0\pmod{11} \ \mbox{for all $\nu$}.$$

Now we define $\theta(\lambda,\mu)=1$ if all the coefficients of $U(\phi^{\lambda}J_{\mu})$ divisible by $11$. Otherwise we put $\theta(\lambda,\mu)=0$.\\

Now from the recurrences obtained in \cite{GB} page 119, we have

\begin{equation}\label{E:2.6}
\theta(\lambda-11,\mu)=
\theta(\lambda+12,\mu-5)=\theta(\lambda,\mu).
\end{equation}

Therefore the values of $\theta(\lambda,\mu)$ is completely determined by its values in the range $0\leq \lambda \leq 10$ and $0\leq \mu \leq 4$. Those values are listed in Table \ref{T:3}.

\begin{center}
\begin{table}[htp]
\begin{tabular}{|*{12}{c|}}
\hline
& \multicolumn{11}{c|}{$\lambda$} \\
\cline{1-12}
\hline
$\mu$ &0 & 1 & 2 & 3 & 4 & 5 & 6 & 7 & 8 & 9 & 10\\
\hline\hline
\cline{1-12}
0 & 0 & 1 & 0 & 1 & 0 & 1 & 0 & 1 & 1 & 0 & 0\\
1 & 1 & 1 & 0 & 1 & 0 & 0 & 0 & 1 & 1 & 0 & 0\\
2 & 1 & 1 & 1 & 0 & 0 & 0 & 0 & 1 & 1 & 0 & 0\\
3 & 1 & 0 & 1 & 0 & 0 & 0 & 0 & 1 & 1 & 0 & 0\\
4 & 1 & 0 & 1 & 0 & 1 & 0 & 1 & 1 & 0 & 0 & 0\\ \hline
\end{tabular}\\
\vspace{2mm}
\caption{Values of $\theta(\lambda, \mu)$.}\label{T:3}
\end{table}
\end{center}
Now we define,
\begin{equation}\label{E:2.7}
A_r(c,d):=\sum_{i=1}^{r-1}\theta(\lambda_{i},\mu_i),
\end{equation}
for any positive integer $r$ and integers $c,d$. We put $A_0:=0$.\\
\vspace{2mm}

We construct a sequence of modular functions that are the generating functions for the $p_{[1^c{11}^d]}(n)$ restricted to certain arithmetic progressions. This generalizes Gordon's construction for "$k-$color"  partitions. Here we use \eqref{E:2.2} repeatedly.

 $L_0 := 1$, and
\begin{align*}
L_1(\tau):&=U_{11}\left(\phi(\tau)^c\prod_{n=1}^{\infty}\frac{(1-q^{11n})^d}{(1-q^{11 n})^d}\right),\\
&=U_{11}\left(q^{5c}\prod_{n=1}^{\infty}\frac{(1-q^{121n})^c(1-q^{11n})^d}{(1-q^n)^c(1-q^{11 n})^d}\right),\\
&=\prod_{n=1}^{\infty}(1-q^{11 n})^c(1-q^n)^d\sum_{m\geq \lceil\frac{5c}{11}\rceil}^{\infty}p_{[1^c11^d]}(11m-5c)q^m.
\end{align*}

    Similarly we can define,
    \begin{align*}
        L_2(\tau) :&=U_{11}\left(\phi^d(\tau)L_1(\tau)\right),\\
    L_2(\tau)& =\prod_{n=1}^{\infty}(1-q^{11 n})^d(1-q^n)^c\sum_{m\geq \left\lceil\frac{5\cdot d+\lceil\frac{5c}{11}\rceil }{11}\right\rceil}^{\infty}p_{[1^c11^d]}(11^2m-5\cdot 11\cdot d -5\cdot c)q^m.
    \end{align*}

    Now, to get an equation for higher powers, We define,
    \begin{equation}\label{E:2.8}
    L_r:=U_{11}\left(\phi^{\lambda_{r-1}}(\tau)L_{r-1}\right),
    \end{equation}
    where
    \[ \quad \lambda_{r}=
\left\{
	\begin{array}{ll}
		 c & \mbox{if  $r$ is even }, \\
		d & \mbox{if  $r$ is odd}.
	\end{array}
\right. \]\\
	
	Then by a short calculation using \eqref{E:2.2} gives,
	
\begin{align}
\label{E:L_r}
    L_{2r}(\tau)& =\prod_{n=1}^{\infty}(1-q^n)^c(1-q^{11 n})^d\sum_{m\geq \mu_{2r}}p_{[1^c11^d]}(11^{2r}m+n_{2r})q^m, \\
    \notag
    L_{2r-1}(\tau)& =\prod_{n=1}^{\infty}(1-q^{11 n})^c(1-q^n)^d\sum_{m\geq \mu_{2r-1}}p_{[1^c11^d]}(11^{2r-1}m+n_{2r-1})q^m.
\end{align}
\vspace{3mm}

From \eqref{E:2.5}, \eqref{E:2.6} and \eqref{E:2.8} we can see that,
\[n_{2r}=-5\cdot d\cdot 11^{2r-1}+n_{2r-1},\]
\[n_{2r-1}=-5\cdot c\cdot 11^{2r-2}+n_{2r-2}.\]
Since $n_0=0$, using above recurrence relations we have,
\vspace{2mm}
\begin{itemize}[label={}]
    \item $n_1=-5\cdot c$
    \item $n_2=-5\cdot 11\cdot d -5\cdot c$
\end{itemize}
Using induction,
\begin{align}
\label{E:n_r}
n_{2r-1}&=-c\left(\dfrac{11^{2r}-1}{24}\right)-11\cdot d\left(\dfrac{11^{2r-2}-1}{24}\right).\\
\notag
n_{2r}&=-c\left(\dfrac{11^{2r}-1}{24}\right)-11\cdot d\left(\dfrac{11^{2r}-1}{24}\right) .
\end{align}
From this we have that,\\

    $24\cdot n_{2r-1}\equiv (c+11\cdot d) \mod 11^{2r-1}$  and
    $24\cdot n_{2r}\equiv (c+11\cdot d) \mod 11^{2r}$ .\\

    Therefore, for each $n_r$ are integers such that,
    \[24n_{r}\equiv (c+11\cdot d) \mod 11^r.\]

    Now, we need to find $\mu_r$ in terms of integers $c,d$. Notice that $\mu_r$ is the least integer $m$ such that $11^r m+n_r\geq 0$, which implies that,

    \begin{align}
    \label{E:mur}
    \mu_{2r-1}& =\left\lceil\frac{11\cdot c+d}{24}-\dfrac{c+11\cdot d}{24\cdot11^{2r-1}}\right\rceil,\\
    \notag
    \mu_{2r}& =\left\lceil\frac{c+11\cdot d}{24}-\dfrac{c+11\cdot d}{24\cdot11^{2r}}\right\rceil.
   \end{align}

Following Gordon, we represent these formulas in the following form.
\begin{align}
\label{E:2.11}
\mu_{2r-1}& =\left\lceil{\frac{11\cdot  c+d}{24}}\right\rceil+\omega(c,d) \quad  \mbox{if $|c+11\cdot d|<11^{2r-1}$}.\\
\notag
\mu_{2r}& =\left\lceil{\frac{c+11\cdot d}{24}}\right\rceil+\omega(c,d) \quad \mbox{if $|c+11\cdot d|< {11}^{2r}$ }.
\end{align}
\[\omega(c,d)=
\left\{
	\begin{array}{ll}
		 1 & \mbox{if  $c+ 11\cdot d<0$ and $24|(c+11\cdot d)$}, \\
		 0 & \mbox{Otherwise}. \\
	\end{array} \right. \]
\vspace{2mm}

\section{The proofs}\label{proofs}
\vspace{2mm}
If $$f(\tau):=\sum_{n\geq n_0}a(n)q^n,$$ we define,
\begin{equation}\label{E:3.1}
\pi\left(f(\tau)\right):=\mbox{min}_{n\geq n_0}\left\{\pi(a(n)\right\}.
\end{equation}

Here we follow Gordon's argument to prove $\pi(L_r)\geq A_r(c,d)$.

\begin{proof}[Proof of Theorem \ref{T:1.1}]
We see that using Proposition \ref{P:Gamma0N}, $L_r\in V$ for all $r$. So we have,
\begin{equation}\label{E:3.2}
L_r=\sum_{\nu}a_{r,\nu}J_{\nu}.
\end{equation}
Now by \eqref{E:2.1}, \eqref{E:2.8} and \eqref{E:3.2},
\begin{equation}\label{E:3.3}
    a_{r,\nu}=\sum_{\mu\geq\mu_{r-1}}a_{r-1,\mu}C_{\mu,\nu}^{\lambda_{r-1}}.
\end{equation}
\vspace{2mm}
Now we prove by induction,
\begin{equation}\label{E:3.4}
 \pi(L_r)\geq A_r+\left[\dfrac{\nu-\mu_r}{2}\right] \quad \mbox{for $\nu\geq \mu_r$}.
 \end{equation}

\begin{equation*}
\pi(a_{r,\nu}) \geq A_r + \left[\dfrac{\nu-\mu_r}{2}\right] \quad \text{for } \nu \geq \mu_r.
\end{equation*}

Since $A_0=0$, the result holds for $r=0$.
Now assume the result is true for $r-1$. Using \eqref{E:3.4} we have,
\begin{equation}\label{E:3.5}
\pi(a_{r-1,\nu})\geq A_{r-1}+\left[\dfrac{\nu-\mu_{r-1}}{2}\right] \quad \text{for $\nu\geq \mu_{r-1}$}.
\end{equation}

Now from equation \eqref{E:3.3} we have,
\begin{equation}\label{E:3.6}
\pi(a_{r,\nu})\geq \min_{\mu\geq\mu_{r-1}}\left(\pi(a_{r-1,\mu})+\pi(C_{\mu,\nu}^{\lambda_{r-1}})\right).
\end{equation}

From \eqref{E:2.5} and \eqref{E:3.5} the right hand side of \eqref{E:3.6} is at least equal to,
\begin{equation}
    A_{r-1}+\left[\dfrac{\mu-\mu_{r-1}}{2}\right]+\left[\dfrac{11\nu-\mu-5\lambda_{r-1}-1}{10}\right].
\end{equation}

This expression cannot decrease if $\mu$ is increased by $2$, so its minimum occurs when $\mu=\mu_{r-1}+1$, therefore at,
\begin{equation}\label{E:3.8}
A_{r-1}+\left[\dfrac{11\nu-\mu_{r-1}-5\lambda_{r-1}-2}{10}\right].
\end{equation}
Now from \eqref{E:2.8} we have,

 \begin{equation}\label{E:3.9}
     \mu_r=\left\lceil\dfrac{5\lambda_{r-1}+\mu_{r-1}}{11}\right\rceil,
 \end{equation}

therefore $\mu_r\geq\left(\dfrac{5\lambda_{r-1}+\mu_{r-1}}{11}\right).$\\

Plugging it in \eqref{E:3.8}, the right hand side of \eqref{E:3.5} is at least equal to
\begin{align*}
    A_{r-1}+\left[\dfrac{11\nu-11\mu_{r}-2}{10}\right] &=A_{r-1}+1+\left[\dfrac{11(\nu-\mu_r)-12}{10}\right]\\
    &\geq A_r+\left[\dfrac{\nu-\mu_r}{2}\right]  \mbox{ for all $\nu\geq\mu_r+2$,}
\end{align*}

since $A_{r-1}+1\geq A_r$.\\

Now consider $\nu=\mu_r$ or $\nu=\mu_r+1$.
\vspace{3mm}

If $\mu=\mu_{r-1},$
\[\pi(a_{r-1},\mu_{r-1})+\pi\left(C_{\mu_{r-1},\nu}^{\lambda_{r-1}}\right)\geq A_{r-1}+\theta(\mu_{r-1},\lambda_{r-1})=A_r.\]

This also works when $\mu\geq \mu_{r-1}+2$, since by induction hypothesis,

\[\pi(a_{r-1},\mu)\geq A_{r-1}+\left[\dfrac{\mu-\mu_{r-1}}{2}\right]\geq A_r.\]

Now consider $\mu=\mu_{r-1}+1$,
\vspace{3mm}

Now we need to show,
\[\pi(a_{r-1},\mu_{r-1}+1)+\pi(C_{\mu_{r-1}+1,\nu}^{\lambda_{r-1}})\geq A_{r-1}+\left[\dfrac{11\nu-(\mu_{r-1}+1)-5\lambda_{r-1}+\delta(\mu_{r-1}+1,\nu)}{10}\right].\]
\vspace{3mm}

Since $\nu=\mu_r$ or $\mu_r+1$, it suffices to show that when $\theta(\lambda_{r-1},\mu_{r-1})=1$,

\[\left[\dfrac{11\mu_r-\mu_{r-1}-1-5\lambda_{r-1}+\delta(\mu_{r-1}+1,\mu_r)}{10}\right]\geq 1,\]
and
\[ \left[\dfrac{11(\mu_r+1)-\mu_{r-1}-1-5\lambda_{r-1}+\delta(\mu_{r-1}+1,\mu_r+1)}{10}\right]\geq 1.\]
\vspace{3mm}

Now from \eqref{E:2.6}, Table \ref{T:2} and Table \ref{T:3} we see that the above claims hold.\\
\end{proof}

\begin{proof}[Proof of Corollary \ref{C:1.2}].\\

Recall \eqref{E:2.7},\\
\[A_r(c,d)=\sum_{i=0}^{r-1}\theta(\lambda_i,\mu_i).\]
Here we follow Gordon's argument from Section 4 of \cite{GB}, again with $k$ replaced by $c + 11d$. Note that Gordon's calculations had terms involving $11k$, which will be written in a more symmetric shape here using the fact that $11(c+11d) \equiv 11c + d \pmod{24}$.

\begin{align*}
A_r(c,d) & =\sum_{i=0}^{\log_{11}(c+11d)}\theta(\lambda_i,\mu_i)+\sum_{i=\log_{11}(c+11d)}^{r-1}\theta(\lambda_i,\mu_i) \\
&=\sum_{i=0}^{\log_{11}(c+11d)}\theta(\lambda_i,\mu_i)+N_1\cdot\theta\left(d,\ceil[\Big]{\dfrac{11c+d}{24}}+\omega(c,d)\right) \\
& \qquad \qquad \qquad \qquad +N_2\cdot\theta\left(c,\ceil[\Big]{\dfrac{c+11d}{24}}+\omega(c,d)\right).
\end{align*}

Here $N_1$ is the number of odd integers and $N_2$ is the number of even integers in the interval $\left[\log_{11}|c+11d|,r-1\right]$ respectively.\\

\begin{equation}
\mbox{Set} \quad \alpha:=\alpha(c,d)=\theta\left(d,\ceil[\Big]{\dfrac{11c+d}{24}}+\omega(c,d)\right)+\theta\left(c,\ceil[\Big]{\dfrac{c+11d}{24}}+\omega(c,d)\right)
.\end{equation}

Now if $r\leq \log_{11}|c+11d|+1$ then $N_1=N_2=0$,
\[A_r\leq \log_{11}|c+11d|.\]

If $r>\log_{11}|c+11d|+1$,
\[\left|N_1-\frac{1}{2}(r-1-\log_{11}(c+11d))\right|+\left|N_2-\frac{1}{2}(r-1-\log_{11}(c+11d))\right|<1.\]
\vspace{2mm}
Now consider,
\[\left|A_r-\frac{1}{2}\alpha(r-1-\log_{11}(c+11d))\right|<2+\log_{11}|c+11d|,\]
\[\left|A_r-\frac{\alpha r}{2} \right|<2+\frac{\alpha}{2}+(1+\frac{\alpha}{2})\log_{11}|c+11d|.\]\\
So we have $A_r=\frac{1}{2}\alpha r+\mathcal{O}\left(\log{}|c+11d|\right)$.
\vspace{2mm}\\

Now we  prove the condition for $\alpha$. As in [2], the proof is complete once we show that first $\alpha$ only depends $c+11d$ with the period $120$. Periodicity follows by the fact that $\alpha(c+11d)$ is invariant under the each maps,
\[c\rightarrow{c+120-11k} \quad\mbox{and}\quad d\rightarrow{d+k}\quad \mbox{for each integer k}.\]
\end{proof}.
\section{Examples}\label{ex}
Here we give three examples to demonstrate our method. The first two examples are Wang's results and the final one is a new example.\\

\begin{example}[$c=1$ and $d=1$]
\[A_r(1,1)=\sum_{i=0}^{r-1}\theta(\lambda_{i},\mu_{i}).\]
In this case, $\lambda_i=1$, $\mu_i=1$ for all $i$ and $\theta(1,1)=1$. So $A_r=r$ and $n_r=-\frac{11^r-1}{2}$.
thus we have,
\[p_{[1^1{11}^1]}\left(11^rm+\frac{11^r-1}{2}\right)\equiv 0\pmod{11^{r}}.\]
In [1], Wang obtained $n_r=-\frac{11^r+1}{2}$, which is slightly different from the $n_r$ given here, though one can obtain Wang's $n_r$
by setting $m \mapsto m-1$, thus both  congruences are equivalent.\\

Now, we illustrate corollary \ref{C:1.2} using this example. Since $c+11d=12$  from table \ref{T:1}, we have $\alpha=2$ so $A_r$ is asymptotically equals to $r$.
\end{example}

\begin{example}[$c=1$ and $d=-1$]
In this case, $\lambda_i$ is 1 if $i$ even or is -1 if $i$ is odd, we also have using \eqref{E:3.9}, $\mu_i$ is 1 if $i$ is odd and it is 0 if $i$ is even. By \eqref{E:n_r} and table \ref{T:3}, we have $n_{2r}=5\cdot \frac{11^{2r}-1}{12}$ and $A_{2r}=r$.
 \[p_{[1^111^{-1}]}\left(11^{2r}m+5\cdot \frac{11^{2r}-1}{12}\right)\equiv 0\pmod{11^{r}}.\]

In view of corollary \ref{C:1.2}, we have $\alpha=1$ from table \ref{T:1} since $c+11d=-10$. Notice here we used the fact that $\alpha$ is a periodic function of $c+11d$ with period 120. Now we have $A_{2r}$ asymptotically equals to $r$ with the error ‘$\epsilon’=0$.
\end{example}

\begin{example}[$c=2$ and $d=7$]

Then we have $\lambda_i$ is 2 if $i$ is even or 7 if  $i$ is odd, we also have $n_{2r}=\frac{7\cdot11^{2r}-79}{24}$ and $A_{2r}=2r-1$. In this case and the most cases $\mu_r$ are not immediately periodic. Here $\mu_0=0$ and $\mu_1=1$ afterwords, $\mu_i$ is $4$ if $i$ is odd or 2 if $i$ is even.

 \[p_{[1^211^{7}]}\left(11^{2r}m-\frac{7\cdot11^{2r}-79}{24}\right)\equiv 0\pmod{11^{2r-1}}.\]

 In this case $c+11d=79$, now from the table \ref{T:1}, $\alpha=2$ so $A_{2r}$ asymptotically $2r$ with the error $\epsilon=-1$.
 \end{example}

\section*{Acknowledgements}
The author thanks thesis advisor Karl Mahlburg for suggesting this problem and for his guidance during this project.

\end{document}